\pgfplotsset{compat=1.8}
\newtheorem{corollary}{Corollary}
\newtheorem{lemma}{Lemma}
\newtheorem{definition}{Definition}
\newtheorem{proposition}{Proposition}
\newcommand{\argmin}{\mathop{\text{~argmin~}}}
\DeclareMathOperator{\prox}{prox}
\definecolor{gris}{gray}{0.90}
\definecolor{gris25}{gray}{0.90}
\definecolor{americanrose}{rgb}{1.0, 0.01, 0.24}
\definecolor{bostonuniversityred}{rgb}{0.8, 0.0, 0.0}
\definecolor{shamrockgreen}{rgb}{0.0, 0.62, 0.38}
\definecolor{selectiveyellow}{rgb}{1.0, 0.73, 0.0}
\definecolor{royalblue}{rgb}{0.25, 0.41, 0.88}
\definecolor{ashgrey}{rgb}{0.7, 0.75, 0.71}
\definecolor{burgundy}{RGB}{159,29,53}
\definecolor{darkgreen}{RGB}{18,53,26}
\definecolor{lightblue}{RGB}{102,217,255}
\definecolor{fakeorange}{RGB}{255,140,102}
\definecolor{arylideyellow}{rgb}{0.91, 0.84, 0.42}
\definecolor{bananayellow}{rgb}{1.0, 0.88, 0.21}
\definecolor{gris_f}{gray}{0.35}
\definecolor{bordure}{rgb}{0.09,0.17,0.68}
\definecolor{aquamarine}{rgb}{0.5, 1.0, 0.83}
\definecolor{apricot}{rgb}{0.98, 0.81, 0.69}
\definecolor{babyblue}{rgb}{0.54, 0.81, 0.94}
\definecolor{uipoppy}{RGB}{225, 64, 5}
\definecolor{uipaleblue}{RGB}{96,123,139}
\definecolor{uiblack}{RGB}{0, 0, 0}
\definecolor{decoda}{RGB}{0,153, 0}
\definecolor{lightgreen}{rgb}{0.56, 0.93, 0.56}
\definecolor{blue_f}{rgb}{0.2, 0.2, 0.6}
\definecolor{cinnamon}{rgb}{0.82, 0.41, 0.12}
\definecolor{darkpastelgreen}{rgb}{0.2, 0.75, 0.24}
\definecolor{drab}{rgb}{0.59, 0.44, 0.09}
\newcommand{\ta}{t^{\ast}}
\newcommand{\nua}{\nu^{\ast}}
\newcommand{\Ua}{U^{\ast}}
\newcommand{\ua}{u^{\ast}}
\newcommand{\Ia}{I^{\ast}}
\title{Computing the proximal operator of the \(\ell_1\) induced matrix norm}
\author{Jeremy E. Cohen\thanks{CNRS, Universit\'e de Rennes, Inria, IRISA Campus de Beaulieu, 35042 Rennes, France.}}
\date{}
\begin{document}
\maketitle

\abstract{
In this short article, for any matrix \(X\in\mathds{R}^{n\times m}\) the proximity operator of two induced norms \( \|X\|_1 \) and \( \|X\|_{\infty}\) are derived. Although no close form expression is obtained, an algorithmic procedure is described which costs roughly \(\mathcal{O}(nm)\). This algorithm relies on a bisection on a real parameter derived from the Karush-Kuhn-Tucker conditions, following the proof idea of the proximal operator of the \( \max \) function found in~\cite{Parikh2014Proximal}.}

\section{Introduction}

\subsection{Short problem statement}

In this paper, for a given real matrix \(X\in\mathds{R}^{n\times m}\) and any \(\lambda>0\), the following optimization problem is solved:
\begin{equation}\label{eq:proxl11}
  \text{Find } U^*\in\mathds{R}^{n\times m} \text{ that minimizes } 
  ~\underset{j\leq m}{\max} \|U_j\|_1 + \frac{1}{2\lambda}\|U-X\|_F^2
\end{equation}
Solving~\eqref{eq:proxl11} means computing the proximal operator of the \(\ell_1\) induced matrix norm which will be denoted as \(\ell_{1,1}\). Notation \(U_j\) refers to the j-th column of matrix \(U\).

\subsection{Reminders}
\subsubsection{The proximal operator}
The proximal operator~\cite{Moreau1965Proximite}, also called proximity operator, of a convex proper lower-semicontinuous real function \( g: \mathds{R}^p \to \mathds{R}\cup \{+\infty\} \) is defined as
\begin{equation}
  \prox_{\lambda g}(x) = \underset{u\in\mathds{R}^m}{\argmin} g(x) + \frac{1}{2\lambda} \|u - x\|_2^2
\end{equation}
for any positive \(\lambda \) and any natural integer \( p \). It is a useful operator in convex optimization as an extension to projection on convex sets, see for instance~\cite{Beck2017first}[Chapter 10] and references therein. It is also an important tool to study properties of Minimum Mean Square Error estimators~\cite{Gribonval2019bayesian}.

Proximal operators often admit closed-form expression, see for instance the list provided at \url{http://proximity-operator.net/}. It also often happens that, although no closed-form is known, the proximity operator can be obtained at low computational cost by some algorithmic procedure. For instance,
\( \prox_{\lambda\max}(x_1,\ldots,x_m) \) is obtained by bisection.

\subsubsection{Matrix induced norms}
For some integers \(n\) and \(m\), given a norm \( \|~\|_q \) on \( \mathds{R}^{m} \) and \( \mathds{R}^{n} \), the induced norm \( \|~\|_{q,q} \) on \(\mathds{R}^{n\times m}\)  is defined as
\begin{equation}\label{eq:inducednorm}
  \underset{\|b\|_q=1}{\max} \|Xb\|_q ~.
\end{equation}

It is well-known that
\begin{equation}
    \|X\|_{1,1} = \max_{1\leq j\leq m} \|X_j\|_1
\end{equation}
where \(X_j\) is the j-th column of matrix \(X\). Moreover, it is also well-known that \( \|X\|_{\infty,\infty} = \|X^T\|_{1,1} \). Let us denote \( \ell_{q,q} \) the induced \(q\)-norm.

Since induced norms are norms, they are continuous convex forms and therefore admit a single-valued proximal operator. Moreover,
\begin{equation}
  \prox_{\lambda \ell_{1,1}}(X) = \underset{U\in\mathds{R}^{n\times m}}{\argmin} \|U\|_1 + \frac{1}{2\lambda}\|U-X\|_F^2 = \underset{U\in\mathds{R}^{n\times m}}{\argmin}\|U^T\|_\infty + \frac{1}{2\lambda}\|U^T - X^T\|_F^2 = {\prox_{\lambda \ell_{\infty,\infty}}(X^T)}^T
\end{equation}
and the proximal operator of the matrix infinity norm is given trivially by the proximal operator of the matrix 1 norm.
Therefore, in what follows, only the proximal operator of the \(\ell_{1,1}\) norm is considered.

\subsection{A remark on the proof technique}

Before getting to the description of the proposed solution to compute the proximal operator of the matrix \( \ell_{1,1} \) norm, let us stress that other proof techniques could very well have been used instead. In particular, using the fact that for any norm \( \|.\|_{\alpha} \) with dual norm \( \| .\|_{\alpha}^* \), it is known~\cite{Beck2017first}[Theorem 6.46] that
\begin{equation}
  \prox_{\lambda \|.\|_{\alpha}} (x) = x - \lambda P_{\mathcal{B}_{\| .\|_{\alpha}^*}[0,1]}(x/\lambda)
\end{equation}
where \( P_{\mathcal{B}_{\| .\|_{\alpha}^*}[0,1]} \) is the projection on the unit ball of the dual norm \( \| .\|^*_{\alpha} \).

In the case of the \( \ell_{1,1} \) norm, although I could not find this result in the literature, it is not hard to prove that
\begin{equation}
  \|X\|_{1,1}^* = \sum_{j=1}^{m} \max_i |X_{ij}|
\end{equation}

Then, projecting on the unit ball of \( \ell_{1,1}^* \) does not seem particularly difficult, but I found a rigorous proof more difficult to obtain than anticipated. Any further development down this line may prove useful for improving the proposed algorithm described further. Indeed, for the proximal operator of the infinity norm, although it is often found in the literature it can be computed by bisection e.g.~\cite{Parikh2014Proximal}, noting that
\begin{equation}
  \prox_{\lambda \ell_{\infty}}(x) = x -  \lambda P_{\Delta}(|x|/\lambda) \ast \text{sign}(x)
\end{equation}
where \( P_{\Delta} \) is the projection inside the unit simplex, \( \ast \) is the element-wise product and \( \text{sign}(x) \) is the sign function,
the proximal operator of the infinity norm may be computed by a simple projection on the simplex. This can be done in a non-iterative fashion and exactly~\cite{wang2013projection}.

\section{KKT conditions}

Given \(X\in\mathds{R}^{n\times m}\) and some positive \( \lambda \), we are interested in solving Problem~\eqref{eq:proxl11}.
It is a strongly convex problem, and therefore admits a unique solution \( U^* \).

This problem belongs to a wider class refered to as finite minimax problems~\cite{di1993smooth,polak2003algorithms}, where a minimizer over the maximum of a finite collection of functions, often considered differentiable, is sought. Note that here the maximum is taken oven non-differentiable functions, and therefore a large portion of that literature does not straightforwardly apply. Moreover, since Problem~\eqref{eq:proxl11} is a very particular finite minimax problem, deriving a specific algorithm should prove beneficial in terms of computing speed vs precision.

Nevertheless, taking inspiration from~\cite{di1993smooth}, let us work on a smooth equivalent problem introducing an auxilliary variable \(t\)
\begin{equation}\label{eq:proxl1equiv}
  \text{Find } (U^*, t^*) \text{ in } \underset{\underset{\|U_j\|_1\leq t}{U\in\mathds{R}^{n\times m},\; t\in\mathds{R}_+}}{\argmin} t + \frac{1}{2\lambda}\|U-X\|_F^2 ~.
\end{equation}
which is still a strictly convex problem with a unique solution \((U^*,t^*)\), but is now constrained.

The Lagrangian for this problem is
\begin{equation}
  \mathcal{L}(U,t,\nu) = t + \frac{1}{2\lambda}\|U-X\|_F^2 + \sum_{j=1}^{m} \nu_j \left(\|U_j\|_1 - t\right)
\end{equation}
with \( \nu_j \) nonnegative for all \(j\).

Let us denote \((\Ua,\ta)\)  an optimal solution to the proximal operator problem~\eqref{eq:proxl11}, and \(\nu^\ast\) an optimal dual variable. The KKT conditions state that
\begin{align}
  \forall j\leq m,\; \|\Ua_j\|_1\leq \ta  \tag{KKT1}\label{eq:kkt1} \\
  \forall j\leq m,\; \nua_j\geq 0 \tag{KKT2}\label{eq:kkt2} \\
  \forall j\leq m,\; \nua_j\left(\|\Ua_j\|_1 - \ta\right) = 0 \tag{KKT3}\label{eq:kkt3} \\
  \sum_{j\leq m} \nua_j = 1 \tag{KKT4}\label{eq:kkt4}\\
  \forall j\leq m, \;\frac{1}{\lambda} (\Ua_j-X_j) \in -\nua_j\partial_{\ell_1}(\Ua_j) \tag{KKT5}\label{eq:kkt5}
\end{align}
where \(\partial_{\ell_1}\) is the sub-differential of the \(\ell_1\) norm.
From first order optimality~\eqref{eq:kkt5}, it can be deduced\footnote{This is quite a well-known result, used for instance to compute the proximal operator of the \(\ell_1\) norm} that
\begin{equation}\label{eq:Unu}
  \forall j\leq m,\, \forall i\leq n, \;
  \ua_{ij} = S_{\lambda\nua_j}(x_{ij}) := {\left[|x_{ij}| - \lambda \nua_{j} \right]}^+\text{sign}(x_{ij})~,
\end{equation}
where \(x_{ij}, u_{ij}\) are elements indexed by \((i,j)\) in respectively \(X\) and \(U\). Function \(S\) is often referred to as the soft thresholding operator.

\paragraph{Uniqueness of $\nu$} Even though it is known that \( (U^*,t^*) \) are unique, this may not be the case for \( \nu* \). Nonetheless, from~\eqref{eq:Unu}, it can be deduced that for any \(j\leq m\), as long as there exist \( l\leq n \) such that \(u^*_{lj}\neq 0\), the optimal dual variables \(\nu_j^\ast\) are uniquely defined as \(\nu_j^* = \frac{|u^*_{lj}|-|x_{lj}|}{\lambda}\). Moreover, it is possible to give a necessary and sufficient condition on \(\lambda\) such that it is guarantied all columns
of \(U^*\) are non-zero (see Proposition~\ref{prop:lambdamax}), namely \(\lambda<\lambda_{\max}\) and $X$ has no zero columns. In what follows, unless specified otherwise, it is therefore always supposed that \( \lambda< \lambda_{\max}\)  and that \(X\) has no zero-columns (which can be removed without loss of generality) so that the triplet \((U^*, t^*, \nu^*)\) is unique.

\paragraph{A first link between $t^*$, $\nu^*$ and $X$} Equation~\eqref{eq:Unu} shows that to compute the proximal operator of the \(\ell_{1,1}\) matrix norm, it is enough to compute the optimal values \(\nua_j \), since the thresholding level depends on the dual variable only. It turns out, it is possible to play with the KKT conditions to link \( \nua_j\) with \(\ta \).

From the complementary slackness~\eqref{eq:kkt3}, we already know that if \(\nua_j>0\), then \( \|\Ua_j\|_1 = t\). Moreover, if \( \nua_j=0\), then the j-th column of \(\prox_{\lambda\ell_{1,1}}(X)\) amounts to \(X_j\)
Therefore, denoting with a slightly abusive notation
\begin{equation}\label{eq:omegadef}
  \Omega_{\lambda\nu_j} = \{i\leq n, \; |x_{ij}|> \lambda\nu_j \},
\end{equation}
using~\eqref{eq:Unu} it holds that
\begin{equation}\label{eq:link}
  \nua_j>0\; \implies \; \ta = \sum_{i\in\Omega_{\lambda\nua_j}} \left(|x_{ij}| - \lambda \nua_j \right) = \sum_{i\in\Omega_{\lambda\nua_j}} |x_{ij}| - \#\Omega_{\lambda\nua_j}\lambda\nua_j
\end{equation}

When computing the proximal operator of the \(\max \) function (\textit{i.e.} as done in~\cite{Parikh2014Proximal}), using the first order optimality condition~\eqref{eq:kkt4} leads to a formula linking \(\ta \) to \(X\), which in turns yields a problem easily solved by bisection. Sadly in our case, the cardinals of the index sets \( \Omega_{\lambda\nua_j} \) prevent such a simplification. Summing for \(j\) and minding \(\nua_j\geq 0\), we have
\begin{equation}
  \sum_{j=1}^{m} \frac{1}{\#\Omega_{\lambda\nua_j}\lambda} {\left[ \sum_{i\in\Omega_{\lambda\nua_j}}|x_{ij}|~ -\ta \right]}^+ = 1
\end{equation}
for non-empty sets \( \Omega_{\lambda\nua_j} \)\footnote{This is ensured by \(\lambda<\lambda_{\max}\), see Proposition~\ref{prop:lambdamax}}, which is does not straightforwardly provide a way to compute \(\nu_j^\ast \). The problem is simpler if one supposes that the sets \(\Omega_{\lambda\nu_j}\) are fixed/known. Indeed for such fixed sets, the left-hand side decreases strictly with \(t\) and the solution for \(t\) can be obtained efficiently by bisection. However, there are a combinatorial amount of such possible sets if one seeks to try them all out.

It appears more work is required to provide an efficient algorithm for computing the desired proximal operator. Hopefully, after close inspection, the problem of estimating \(\ta \) and \(\nua_j\) can be simplified due to a few simple results, examined in the section below. We will see that eventually, a simple bisection will be sufficient to compute the desired proximal operator.

\paragraph{Notations}

Let us pause and discuss useful notations for the rest of this manuscript.
Anticipating on results exposed further in the manuscript, let us denote \((\hat{U},\hat{t},\hat{\nu})\) the output of Algorithm~\ref{alg:Algorithm1} that I claim produces an approximation as closed as desired to the optimal solution. Let us also label as ``active set'' for some parameters \((U,t,\nu)\) the set of columns of \(U\) which \(\ell_1\) norm amounts to \(t\)\footnote{Anticipating on further results, by convention the active set does not include columns that are not thresholded, even if their \(\ell_1\) is exactly \(t\) ``by chance''.}.
Denote \(\Ia \) the active set of the optimal solution, and \( \hat{I} \) the active set of the output of the further proposed algorithm. The complementary of an active set \(I\) is denoted \(\overline{I}\).

\section{Algorithm description}


%

\subsection{Designing the algorithm}

Knowing the proximal operator of the max function is usually computed with bisection on \(t\), it is natural to turn towards this option for computing the proximal operator of the \(\ell_1\) matrix induced norm.

Let us therefore suppose that a value of \(t\) is given. The question is, what
can we do to gain knowledge on the position of \(\ta\) relative to \(t\)? It
turns out the answer is a little intricate. Let us give an informal description
of the procedure. First, following Definition~\ref{def:tas}, from \(t\) one may
define \(I(t)\) as the active set if it happened that \(\ta =t\) (which is very
unlikely at this stage). A nice feature of the KKT conditions in our
problem is that removing first-order optimality conditions~\eqref{eq:kkt4} from
the set of optimality conditions, it is
possible to compute uniquely dual parameters \(\nu(t)\) with active set exactly
\(I(t)\), see Section~\ref{sec:computev}. This is where things get interesting: it is quite simple to show that
\(\ta\) is respectively above or below \(t\) if and only if \(\sum_{j\in
I(t)}\nu_j(t)\) is below or above 1. The intuition behind this last fact is that
as \(t\) grows, the dual variable \(\nu_j(t)\) always and simultaneously
decrease, even if the active set if wrongly estimated. Therefore, the relative
position of \(t\) with \(\ta\) is determined by the sign of \(\sum_{j\in
I(t)}\nu_j(t)-1\). This is further detailed in
Proposition~\ref{prop:okbisection}.

From there, information on the location of \(\ta\) is available, which allows to perform a bisection on \(t\). The procedure stops when a \(t\) has been found sufficiently close to \(\ta\). It is then possible to show, see Proposition~\ref{prop:iterprec}, that variables \((\nu, U) \) are as close as desired to their optimal values. Algorithm~\ref{alg:Algorithm1} is a proposed implementation of this bisection.

\begin{algorithm}
  \caption{Proposed algorithm for computing \(\prox_{\lambda\ell_{1,1}}\), in python numpy notation.}\label{alg:Algorithm1}
  \begin{algorithmic}
   \STATE{\textbf{Inputs}: \(n\times m\) real matrix \(X\), positive \(\lambda \), precision \(\delta\).}
   \STATE{1. Sort \(|X|\) column-wise in decreasing order, store the output in \(Y\).}
   \STATE{2. Sort \(\{ \|X_j\|_1, j\leq m\} \) in decreasing order, store the sorted indexes in a list \(J=[j_1,\ldots,j_m]\).}
   \STATE{3. If \( \lambda \geq \sum_j y_{1j} \) then return \(U=0\), \(t=0\) and
   \(\nu_j = \frac{y_{1j}}{\lambda}\).}
   \STATE{4. Initialize \(t_{\min}=0\), \( t_{\max}=\| X_{j_1} \|_1 \) and \(t = \frac{t_{\max}}{2}\). Set \(\nu_j=0\).}
  \WHILE{\(t_{\max} - t_{\min} > \delta \) }
    \STATE{5. Compute \(I(t)\) as in Definition~\ref{def:tas}.}
    \STATE{6. Compute \(\nu_j(t)\) for \(j\in I(t)\) using Algorithm~\ref{alg:routine} (see Section~\ref{sec:computev}).
    Set \(\nu_j(t)=0\) for all \( j\in\overline{I(t)} \).}
    \STATE{7. If \(\sum_{j\leq m} \nu_j(t) > 1 \) set \(t_{\min}=t\) and increase \(t\) as \(t = \frac{t+t_{\max}}{2}\) }
    \STATE{8. If \(\sum_{j\leq m} \nu_j(t) < 1 \) set \(t_{\max}=t\) and decrease \(t\) as \(t = \frac{t+t_{\min}}{2}\) }
  \ENDWHILE{}
  \STATE{\textbf{Output}: Matrix \(\widehat{U}\) such that \(\widehat{U}_j \approx S_{\lambda\nua_j}(X_j) \), slack variable \(t\approx \ta\), dual variables \(\nu \approx \nua \). }
  \end{algorithmic}
\end{algorithm}

\subsection{Complexity}
It is noticeable that Algorithm~\ref{alg:Algorithm1} involves only a single loop, and sorting or norm computations outside that loop. The column-wise sorting at line 1 has theoretical complexity \( \mathcal{O}(n\log_2(n)m) \). Line 2 has a cost \( \mathcal{O}(nm + m\log_2(m)) \). Inside the while loop, the only non-trivial operation is line 6, which has complexity roughly \( \mathcal{O}(m\log_2(n)) \), see Algorithm~\ref{alg:routine}. The number of iterations in the while loop is directly controlled by \(\delta \). Indeed the search interval length is divided by two at each iteration, such that \(\delta  = \frac{t_{\max}}{2^{k+1}}\) with \(k\) the number of iterations.
Therefore \( k = \log_2(t_{\max}/\delta) -1 \), which should not be a large number for reasonable precision. In particular, for a fixed small value \(\delta\), and supposing that \(t_{\max}\) scales linearly with \(n\), the number of iteration \(k\) should grow roughly as \(\log_2(n)\). Within that framework, the total complexity of the algorithm is \( \mathcal{O}(\log^2_2(n)m + nm + m\log_2(m)) \) with a small constant.

\section{Proving the algorithm works}
In what follows, I show a number of properties revolving around Algorithm~\ref{alg:Algorithm1}.
\begin{itemize}
  \item A connection between \(t\), \( I \) and \( \nu \) when the KKT conditions are partially verified is established in Proposition~\ref{prop:bij}. This connection is essential to understanding why the algorithm works.
  \item The key result relating \(\sum_{j\leq m} \nu_j(t) \) and \(t-\ta\) is established in Proposition~\ref{prop:okbisection}. This will guaranty the convergence of Algorithm~\ref{alg:Algorithm1} towards the optimal \(\ta \).
  \item Since the proposed routine produces an approximation of \(\ta\), I show how precision on \(\ta\) propagates nicely on the other variables \(\nua\) and \( \Ua \) in Proposition~\ref{prop:iterprec}.
  \item An auxiliary algorithm is provided to compute \(\nu_j(t)\) in Section~\ref{sec:computev}, which is an essential part of the proposed algorithm.
  \item Some guidance is provided regarding the choice of lambda. In particular, the maximum \(\lambda\), above which the proximal operator is null, is derived in closed-form.
\end{itemize}

\subsection{Connecting the parameters}

\subsubsection{Placing slack and dual variables on a parametric curve.}

From equation~\eqref{eq:link}, it can be seen that the slack variable \(t\) and dual variables \( \nu_j \) are connected. However, as long as $\lambda$ is small enough, there is a unique pair \((t^\ast,\nu^\ast)\) that satisfies all the KKT optimality conditions.
Intuitively, to obtain a more flexible relationship, the KKT conditions should be relaxed.
Let \(\Sigma(I)\) the set of \((t,\nu)\) that satisfy
dual feasibility~\eqref{eq:kkt2},
complementary slackness~\eqref{eq:kkt3}, first order optimality~\eqref{eq:kkt5} and for which any \(j\in I\) is an active constraint index and the solution to the proximal operator problem is non-trivial, \textit{i.e.}
\begin{equation}
  \Sigma(I) = \{ (t,\{\nu_j\}_{j\in I}) \; | \;  t>0,\; \; \forall j\in I, \nu_j >0, \; \|S_{\lambda\nu_j}(X_j)\|_1=t, \;  \}~.
\end{equation}
where the soft-thresolding operator \(S_{\lambda\nu_j}\) is taken entry-wise.
Moreover, the fact that \(I\) contains only active constraints imposes that \(t<t_{\max}(I) := \min_{j\in I} \|X_j\|_1 \) as shown below in Proposition~\ref{prop:tdomain}. In turn, this conditions automatically ensures that \(\nu_j>0\).
\begin{proposition}\label{prop:tdomain}
  Let \((t,\nu)\in\Sigma(I)\). Then \(0< t < t_{\max}(I)\)  where \(t_{\max}(I)=\min_{j\in I} \|X_j\|_1 \). Conversely, any \((t,\nu)\) such that \(0<t<t_{\max}\) and \(\|S_{\lambda\nu_j}(X_j) \|_1 = t\) for all \(j\) in \(I\) belongs to \(\Sigma(I)\).
  In particular, the globally optimal slack variable \(\ta \) lives in \([0,t_{\max}]\) where \(t_{\max} = \max_{j\leq m} \|X_j\|_1 \).
\end{proposition}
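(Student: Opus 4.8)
The plan is to reduce the whole statement to elementary properties of column-wise soft-thresholding. For each column index \(j\), introduce \(\phi_j:\mathds{R}\to\mathds{R}_+\) defined by \(\phi_j(\mu)=\|S_\mu(X_j)\|_1=\sum_{i=1}^n\max(|x_{ij}|-\mu,0)\). First I would record its properties, all obtained by inspection since \(\phi_j\) is a finite sum of continuous piecewise-linear functions: (i) \(\phi_j\) is continuous; (ii) \(\phi_j(0)=\|X_j\|_1\), which is strictly positive because \(X\) has no zero column (standing assumption); (iii) on the interval \([0,\max_i|x_{ij}|]\) the right-derivative of \(\phi_j\) equals \(-\#\{i:|x_{ij}|>\mu\}\le-1\), so \(\phi_j\) is \emph{strictly} decreasing there, while \(\phi_j(\mu)=0\) for every \(\mu\ge\max_i|x_{ij}|\); (iv) for \(\mu\le 0\) each summand equals \(|x_{ij}|-\mu\ge|x_{ij}|\), hence \(\phi_j(\mu)\ge\|X_j\|_1\).

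For the forward inclusion, let \((t,\nu)\in\Sigma(I)\). By definition \(t>0\), and for each \(j\in I\) we have \(\nu_j>0\) and \(\phi_j(\lambda\nu_j)=t>0\). Positivity of \(\phi_j(\lambda\nu_j)\) together with (iii) forces \(0<\lambda\nu_j<\max_i|x_{ij}|\), and on that interval \(\phi_j\) is strictly decreasing, so \(t=\phi_j(\lambda\nu_j)<\phi_j(0)=\|X_j\|_1\). Minimising over \(j\in I\) gives \(t<\min_{j\in I}\|X_j\|_1=t_{\max}(I)\), which with \(t>0\) is exactly \(0<t<t_{\max}(I)\).

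For the converse, assume \(0<t<t_{\max}(I)\) and \(\|S_{\lambda\nu_j}(X_j)\|_1=t\) for all \(j\in I\); the only thing missing from the definition of \(\Sigma(I)\) is \(\nu_j>0\). Fix \(j\in I\). Then \(t<t_{\max}(I)\le\|X_j\|_1=\phi_j(0)\), and by (iv) also \(\phi_j(\mu)\ge\|X_j\|_1>t\) for every \(\mu\le 0\); since \(\phi_j(\lambda\nu_j)=t\), this rules out \(\lambda\nu_j\le 0\), so \(\nu_j>0\). Hence \((t,\nu)\in\Sigma(I)\).

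Finally, for the global bound: the pair \((X,t_{\max})\) with \(t_{\max}=\max_{j\le m}\|X_j\|_1\) is feasible for the equivalent problem~\eqref{eq:proxl1equiv} because \(\|X_j\|_1\le t_{\max}\) for every \(j\), and it attains objective value \(t_{\max}\). By optimality of \((U^*,t^*)\) we get \(t^*+\frac{1}{2\lambda}\|U^*-X\|_F^2\le t_{\max}\), hence \(t^*\le t_{\max}\); and the constraint \(t^*\ge\|U_j^*\|_1\ge 0\) gives the lower bound, so \(t^*\in[0,t_{\max}]\). I do not expect a genuine obstacle here; the one point requiring care is getting the \emph{strict} inequality \(t<t_{\max}(I)\) rather than \(t\le t_{\max}(I)\), which rests entirely on the strict monotonicity of \(\phi_j\) on \([0,\max_i|x_{ij}|]\) asserted in (iii).
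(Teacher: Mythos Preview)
Your proof is correct and follows essentially the same idea as the paper, which compresses both directions into the single equivalence \(t\geq t_{\max}(I)\Leftrightarrow\exists j\in I:\|S_{\lambda\nu_j}(X_j)\|_1\geq\|X_j\|_1\Leftrightarrow\exists j\in I:\nu_j\leq 0\); your introduction of \(\phi_j\) and its properties simply makes this monotonicity argument explicit. The one place you diverge is the global bound on \(\ta\): the paper deduces it by applying the first part to \(\Ia\) (noting \((\ta,\nua)\in\Sigma(\Ia)\) and then \(t_{\max}(\Ia)\leq t_{\max}\)), whereas you argue directly via feasibility of \((X,t_{\max})\) in~\eqref{eq:proxl1equiv}---a self-contained variant that does not lean on the standing assumption \(\lambda<\lambda_{\max}\).
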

\begin{proof}
  One easily checks that \(t\geq t_{\max}(I)\) is equivalent to the existence of some index \(j\) such that
  \begin{equation}
    \|S_{\lambda\nu_j}(X_j)\|_1\geq \|X_j\|_1,
  \end{equation}
  which happens if and only if \(\nu_j\leq0\). The global result is obtained by noticing that \((\ta,\nua)\in\Sigma(\Ia)\), and taking a pessimistic bound on \(t_{\max}(\Ia)\).
\end{proof}

 In other words, one may write instead
\begin{equation}
  \Sigma(I) = \{ (t,\{\nu_j\}_{j\in I}) \; | \; 0< t<t_{\max}(I)\;\text{and}\; \forall j\in I, \|S_{\lambda\nu_j}(X_j)\|_1=t \}
\end{equation}

Interestingly, the following proposition shows that \(\Sigma(I)\) is a parametric curve in one of the variables. More precisely, under KKT conditions~\eqref{eq:kkt2},~\eqref{eq:kkt4} and~\eqref{eq:kkt5}, \(t\) and all  \(\nu_j\) are in bijection.

\begin{proposition}\label{prop:bij}
  Let \(I\) some index set. There exist strictly decreasing piece-wise linear isomorphisms $\psi_j$ such that 
  any \((t,\nu)\) in \(\Sigma(I)\) satisfies \(\nu_j = \psi_j(t)\) 
  for all \(j\in I\).
\end{proposition}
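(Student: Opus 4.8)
The plan is to fix $j\in I$ and analyze the scalar map $t\mapsto \nu_j$ defined implicitly by the equation $\|S_{\lambda\nu_j}(X_j)\|_1 = t$ on the domain $0<t<t_{\max}(I)$. Write $Y_j$ for the vector of entries $|x_{ij}|$ sorted in decreasing order, $y_{1j}\geq y_{2j}\geq\cdots\geq y_{nj}$, and for an integer $1\leq k\leq n$ let $\sigma_k = \sum_{i=1}^k y_{ij}$. For a dual value $\nu$ with $\lambda\nu$ lying strictly between $y_{(k+1)j}$ and $y_{kj}$, exactly $k$ entries survive the soft-thresholding, so
\begin{equation}
  \|S_{\lambda\nu}(X_j)\|_1 = \sigma_k - k\lambda\nu .
\end{equation}
Hence on each such interval of $\nu$ the map $\nu\mapsto t$ is affine with slope $-k\lambda<0$, so it is continuous and strictly decreasing; across the breakpoints $\lambda\nu = y_{kj}$ the one-sided values agree (both equal $\sigma_{k-1}-(k-1)y_{kj} = \sigma_k - k y_{kj}$ after cancelling the zero term), so the full map $\phi_j:\nu\mapsto \|S_{\lambda\nu}(X_j)\|_1$ is continuous, piecewise linear and strictly decreasing on the relevant range of $\nu$. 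I would then check the endpoint behaviour: as $\lambda\nu\downarrow 0$ we get $t\uparrow \sigma_n = \|X_j\|_1 \geq t_{\max}(I)$, and as $\lambda\nu\uparrow y_{1j}$ (the point past which all entries are killed and $t=0$) we get $t\downarrow 0$. So $\phi_j$ restricted to $\{\nu>0 : \phi_j(\nu)<t_{\max}(I)\}$ is a strictly decreasing continuous bijection onto $(0,t_{\max}(I))$, and I define $\psi_j := \phi_j^{-1}$.

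Next I would verify that $\psi_j$ is itself strictly decreasing and piecewise linear: the inverse of a strictly decreasing continuous piecewise-linear bijection between open intervals is again strictly decreasing, continuous and piecewise linear, with breakpoints at the images $t = \sigma_k - k\,y_{kj}$ of the breakpoints of $\phi_j$, and with slope $-1/(k\lambda)$ on the piece where $k$ entries are active. This makes each $\psi_j$ a piecewise-linear isomorphism of $(0,t_{\max}(I))$ onto the corresponding $\nu$-interval, as claimed. Finally, the statement about $\Sigma(I)$ follows by definition: using the reformulation of $\Sigma(I)$ established just before the proposition, a pair $(t,\{\nu_j\}_{j\in I})$ lies in $\Sigma(I)$ precisely when $0<t<t_{\max}(I)$ and $\phi_j(\nu_j)=t$ for every $j\in I$, i.e.\ $\nu_j=\psi_j(t)$ for every $j\in I$.

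The only genuinely delicate point is the continuity of $\phi_j$ at the breakpoints $\lambda\nu=y_{kj}$ when several entries of $X_j$ are equal, since then ``the number of surviving entries'' is not well defined exactly at the breakpoint; the clean way around this is to note that $\phi_j$ is a finite sum of the continuous convex functions $\nu\mapsto[\,|x_{ij}|-\lambda\nu\,]^+$, hence continuous and convex without any case analysis, and that on the open intervals between consecutive \emph{distinct} values of $\{y_{ij}\}$ it is affine with a strictly negative slope, which already forces strict monotonicity. Everything else — the endpoint limits, the slope formula $-1/(k\lambda)$ for $\psi_j$, and the translation back to $\Sigma(I)$ — is routine bookkeeping.
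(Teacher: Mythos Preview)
Your proof is correct and follows essentially the same route as the paper's: both analyze the map $\nu_j\mapsto \|S_{\lambda\nu_j}(X_j)\|_1$ via the sorted entries of $|X_j|$, observe that on each interval between consecutive breakpoints it is affine with strictly negative slope, and then invert to obtain $\psi_j$. Your version is somewhat more careful---you explicitly check continuity at the breakpoints, handle repeated entries via the convexity/continuity of $\nu\mapsto[\,|x_{ij}|-\lambda\nu\,]^+$, and verify the endpoint behaviour---but the argument is the same one the paper gives.
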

\begin{proof}
Let \( j\in I \). For any \((t,\nu_j) \) in \(\Sigma(I)\), as in~\eqref{eq:link}, complementary slackness and first order optimality yield
\begin{equation}\label{eq:link2}
  \sum_{i\in\Omega_{\lambda\nu_j}} |x_{ij}|-\lambda \#\Omega_{\lambda\nu_j} \nu_j = t  ~.
\end{equation}
Let \( \sigma_j \) the index permutation that sorts \(X_j\) in increasing order. Because of~\eqref{eq:kkt5}, it is can be observed that the sets \( \Omega_{\lambda\nu_j} \) are telescopic. In particular, if for some \( i\leq n \) it holds that
 \begin{equation}\label{eq:interval}
  \frac{|x_{\sigma_j(i)}|}{\lambda}\leq \nu_j \leq \frac{|x_{\sigma_j(i+1)}|}{\lambda}~,
\end{equation}
then \(\Omega_{\lambda\nu_j} = \{\sigma_j(1),\ldots,\sigma_j(i)\} \). Moreover, on each intervals~\eqref{eq:interval}, equation~\eqref{eq:link2} shows that one can define \(\psi_j^{-1}(\nu_j)=t\) as a decreasing function of \(\nu_j\) with non-zero, finite slope \(\frac{1}{\#\Omega_{\lambda\nu_j}}\). This proves that \(\psi_j^{-1}\)
defines a piece-wise linear, strictly decreasing bijection. By a simple symmetry argument, \(\psi_j: t\mapsto \nu_j\) is therefore also a piece-wise linear strictly decreasing function.

\end{proof}


\paragraph{Remark}
For any index set $I$ and any $t$ such that \(0<t<t_{\max}(I)\), it holds that \((t, \{\psi_j(t)\}_{j\in I})\in \Sigma(I)\). In other words, isomorphisms \(\psi_j\) can be defined independently of the choice of I, and may be computed separately for each $j\leq m$.


\subsubsection{Slack variables define active sets.}
I have shown above that for any index \(j\leq m\), given a slack variable \(t<t_{\max}\), a subset of the KKT conditions imply that the dual variable \( \nu_j \) should either be null or is entirely determined by \( t \). Moreover, as long as index \(j\) is in the considered active set \(I\), \(\psi_j(t)\) is independent of \(I\). This is somewhat unsatisfying, since one would like to build a connection \(\nu_j(t)\) without having to worry about some index set \(I\).

To that end, I define the active set induced by a given value of \(t\), which is the one we will consider for computing \(\psi_j(t)\).
\begin{definition}\label{def:tas}
  For a given \(t\in[0,t_{\max}]\), define \(I(t) = \{j\leq m, \; \|X_j\|_1>t \}\).
\end{definition}
It is clear that there is a unique such active set \(I(t)\) per value of \(t\). Intuitively, \(I(t)\) would be the optimal active set if one knew that \(t\) was actually \(\ta\). Now, given a value of \(t\) meant to approximate \( \ta \), one should not be interested in other sets than \(\Sigma(I(t))\). Therefore, we may define \(\nu_j(t)\) for any index \(j\leq m\) as follows:
\begin{equation}\label{eq:defnu}
  \nu_j(t):= \left\{
  \begin{array}{ll}
  0  & \text{if } j\notin I(t) \\
  \psi_j(t)  & \text{if } j\in I(t), \text{ as defined in Proposition~\ref{prop:bij}}
  \end{array}\right.
\end{equation}
Also, vector \( \nu(t)\) is the collection of all \(\nu_j(t)\) for \(j\leq m\).

Before tackling the core result that allows for bisection search, let us note that as \(t\) grows, it may approach a boundary \( \|X_j\|_1\) for some \(j\in I(t)\). If \( t \) keeps growing, then \(I(t)\) shrinks since \(j\) may not be in the active set anymore. This observation leads to the following trivial lemma:
\begin{lemma}\label{lemma:telesc}
  If \(t<t'\) then \(I(t')\subseteq I(t)\).
\end{lemma}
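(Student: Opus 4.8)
The plan is to prove Lemma~\ref{lemma:telesc} directly from Definition~\ref{def:tas} by a one-line monotonicity argument. Recall that \(I(t) = \{j\leq m,\; \|X_j\|_1 > t\}\). Suppose \(t < t'\). I want to show every index in \(I(t')\) also lies in \(I(t)\).

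First I would take an arbitrary \(j\in I(t')\). By Definition~\ref{def:tas} this means \(\|X_j\|_1 > t'\). Since \(t < t'\), transitivity of the strict order on \(\mathds{R}\) gives \(\|X_j\|_1 > t' > t\), hence \(\|X_j\|_1 > t\), which is precisely the condition for \(j\in I(t)\). As \(j\) was arbitrary, this establishes \(I(t')\subseteq I(t)\), completing the proof.

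There is essentially no obstacle here: the statement is purely set-theoretic bookkeeping about the sublevel-threshold family \(\{I(t)\}\), and the only ingredient is that \(t\mapsto \{j : \|X_j\|_1 > t\}\) is non-increasing in \(t\) with respect to inclusion. I would keep the write-up to two or three sentences and not invoke any of the earlier propositions (Proposition~\ref{prop:bij}, Proposition~\ref{prop:tdomain}) since they are not needed. If anything merits a remark, it is only that the inclusion may be strict exactly when some \(\|X_j\|_1\) lies in the half-open interval \((t, t']\); but that observation is not required for the lemma and I would omit it to keep the proof minimal.

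\begin{proof}
  Let \(t<t'\) and take any \(j\in I(t')\). By Definition~\ref{def:tas}, \(\|X_j\|_1 > t'\). Since \(t<t'\), it follows that \(\|X_j\|_1 > t' > t\), hence \(\|X_j\|_1 > t\), which means \(j\in I(t)\). As \(j\in I(t')\) was arbitrary, \(I(t')\subseteq I(t)\).
\end{proof}
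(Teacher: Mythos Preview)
Your proof is correct and is exactly the natural one-line monotonicity argument the paper has in mind; indeed, the paper calls the lemma ``trivial'' and does not even spell out a proof. There is nothing to add or change.
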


  \subsection{Convergence to the optimal slack variable}

Now that all the ingredients have been prepared, we are ready for the main result. In a nutshell, for a given \(t\), the value of \(\sum_{j\leq m} \nu_j(t)\) alone provides information on whether \(\ta\) is ``right'' or ``left'' of \(t\).

\begin{proposition}\label{prop:okbisection}
  Let \(t\in]0,t_{\max}[\), and compute \(\nu(t)\). It holds that \(\text{sign}(\sum_{j\in I(t)}\nu_j(t) - 1) = \text{sign}(\ta - t)\).
\end{proposition}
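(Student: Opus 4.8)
The plan is to reduce the claim to a monotonicity statement about the scalar function $t \mapsto \Phi(t) := \sum_{j \in I(t)} \nu_j(t)$ on the interval $]0, t_{\max}[$, and then to locate $\Phi^{-1}(1)$ and identify it with $t^\ast$. First I would establish that $\Phi$ is strictly decreasing. On each maximal subinterval where all the active sets $I(t)$ and all the index sets $\Omega_{\lambda\nu_j(t)}$ are constant, Proposition~\ref{prop:bij} tells us each $\nu_j(t) = \psi_j(t)$ is affine and strictly decreasing, so $\Phi$ is affine and strictly decreasing there. The only subtlety is what happens at a breakpoint $t_0 = \|X_{j_0}\|_1$ where some index $j_0$ leaves the active set: by Lemma~\ref{lemma:telesc} the set shrinks, but I must check that the contribution $\nu_{j_0}(t)$ of the departing column tends to $0$ as $t \uparrow t_0$ (which follows from $\|S_{\lambda\nu_{j_0}}(X_{j_0})\|_1 = t \to \|X_{j_0}\|_1$ forcing $\nu_{j_0} \to 0^+$), so $\Phi$ is continuous across such a breakpoint and remains strictly decreasing; an analogous remark handles the interior breakpoints coming from the $\Omega$ sets, where $\Phi$ stays continuous by construction of $\psi_j$. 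Hence $\Phi$ is a continuous, strictly decreasing function on $]0,t_{\max}[$.

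Next I would pin down the value $1$ in the range of $\Phi$. The pair $(\ta, \nua)$ satisfies all five KKT conditions; in particular it satisfies \eqref{eq:kkt2}, \eqref{eq:kkt4} and \eqref{eq:kkt5}, and by \eqref{eq:kkt3} every $j$ with $\nua_j > 0$ has $\|\Ua_j\|_1 = \ta$, while every $j$ with $\nua_j = 0$ gives $\Ua_j = X_j$ and hence (since $\lambda < \lambda_{\max}$ and $X$ has no zero column, so $\|\Ua_j\|_1 \le \ta$ cannot be tight for a non-thresholded column beyond the active set) $j \notin I(\ta)$. Conversely, for $j \in I(\ta)$ we get $\|X_j\|_1 > \ta$, which forces $\nua_j > 0$ and $\|\Ua_j\|_1 = \ta$, i.e.\ $(\ta, \nua_{|I(\ta)}) \in \Sigma(I(\ta))$, so by Proposition~\ref{prop:bij} $\nua_j = \psi_j(\ta) = \nu_j(\ta)$ for $j \in I(\ta)$ and $\nua_j = 0 = \nu_j(\ta)$ for $j \notin I(\ta)$. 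Therefore $\Phi(\ta) = \sum_{j} \nua_j = 1$ by \eqref{eq:kkt4}. (The boundary case $\ta \in \{0, t_{\max}\}$ is excluded by the running assumption $\lambda < \lambda_{\max}$, which I would invoke via Proposition~\ref{prop:lambdamax}.)

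Finally, combining the two: $\Phi$ is continuous and strictly decreasing on $]0,t_{\max}[$ with $\Phi(\ta) = 1$. Hence for $t < \ta$ we have $\Phi(t) > \Phi(\ta) = 1$, i.e.\ $\sum_{j\in I(t)}\nu_j(t) - 1 > 0$ while $\ta - t > 0$; and for $t > \ta$ we have $\Phi(t) < 1$ while $\ta - t < 0$. In both cases the signs agree, and at $t = \ta$ both are zero, which is exactly $\operatorname{sign}\big(\sum_{j\in I(t)}\nu_j(t) - 1\big) = \operatorname{sign}(\ta - t)$.

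I expect the main obstacle to be the continuity-at-breakpoints argument for $\Phi$: one must be careful that when a column index leaves $I(t)$ its dual contribution vanishes in the limit rather than jumping, and that no two breakpoints conspire to create a jump; making this rigorous requires a clean statement that $\nu_j(t) \to 0^+$ as $t \uparrow \|X_j\|_1$, which I would extract from the defining relation $\|S_{\lambda\nu_j(t)}(X_j)\|_1 = t$ together with continuity of $\nu_j \mapsto \|S_{\lambda\nu_j}(X_j)\|_1$. A secondary point needing care is the converse direction of the KKT-to-$\Sigma$ argument — ruling out a column that is exactly at the active threshold ``by chance'' without being thresholded — which the paper's footnote convention on the active set is designed to handle and which I would simply cite.
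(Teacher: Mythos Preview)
Your proof is correct but takes a genuinely different route from the paper's. The paper argues by a three-way case split on the relation between $I(t)$ and $\Ia$: invoking Lemma~\ref{lemma:telesc}, one of $I(t)\subsetneq\Ia$, $I(t)=\Ia$, or $\Ia\subsetneq I(t)$ holds, and in each case the sign of $\sum_{j\in I(t)}\nu_j(t)-1$ is read off directly by comparing the sum termwise to $\sum_{j\in\Ia}\nua_j=1$, using only the monotonicity of each $\psi_j$. No continuity of the aggregate $\Phi$ is needed; the nesting of active sets from Lemma~\ref{lemma:telesc} does the work instead.

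Your approach packages everything into the single scalar function $\Phi(t)=\sum_{j\in I(t)}\nu_j(t)$, shows it is continuous and strictly decreasing on $]0,t_{\max}[$, identifies $\Phi(\ta)=1$, and concludes. This is conceptually cleaner---one monotone function crossing level $1$ exactly once---and it yields as a by-product the global picture that justifies bisection without ever referring to $\Ia$ during the argument. The price is the continuity-at-breakpoints verification you flagged: you must check that when $t\uparrow\|X_{j_0}\|_1$ the departing contribution $\nu_{j_0}(t)\to 0$, which you correctly extract from $\|S_{\lambda\nu_{j_0}}(X_{j_0})\|_1=t$. The paper's case analysis sidesteps this entirely. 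Both proofs are short; the paper's is slightly more economical in what it assumes, while yours makes the structure behind the bisection more transparent.
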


\begin{proof}
  According to Lemma~\ref{lemma:telesc}, we may always assume that either \(I(t)\subsetneq \Ia\), \(I(t)=\Ia\) or \(\Ia \subsetneq I(t)\). Let us consider each case separately.

  \underline{Case 1: \(I(t)\subsetneq \Ia\)}:\\
  It holds that \(t > \max_{j\in \overline{I(t)}}  \|X_j\|_1 \geq \ta \), where the last inequality comes from the fact that \(\Ia\) must contain the column of \(X\) not already in \(I(t)\) with largest \(\ell_1\) norm according to~\eqref{eq:kkt1}. Therefore, in that case,
  \begin{equation}
    \sum_{j\in I(t)} \nu_j(t) < \sum_{j\in \Ia} \nu_j(t) < \sum_{j\in \Ia} \nua_j =1~.
  \end{equation}
  The first inequality comes from \(I(t)\subsetneq \Ia\) and the second one from \(t>\ta\). The equality is simply~\eqref{eq:kkt4}.

  \underline{Case 2: \(\Ia \subsetneq I(t)\)}:\\
  This case is similar to Case 1. The same reasoning shows that \(\ta > t\), and
  \begin{equation}
    \sum_{j\in I(t)} \nu_j(t) > \sum_{j\in \Ia} \nu_j(t) > \sum_{j\in \Ia} \nua_j =1~.
  \end{equation}

  \underline{Case 3: \(I(t) = \Ia\)}: \\
  It holds that
  \begin{equation}
    \sum_{j\in I(t)} \nu_j(t) = \sum_{j\in \Ia} \nu_j(t) - \nua_j + \nua_j = \left(\sum_{j\in \Ia} \nu_j(t) - \nua_j\right) + 1
  \end{equation}
  and therefore
  \begin{equation}
    \sum_{j\in I(t)}\nu_j(t) - 1 = \sum_{j\in \Ia} \nu_j(t) - \nu_j(\ta)~.
  \end{equation}
  Since all \(\nu_j(t)\) are decreasing functions of \(t\), the sign of \(t-\ta\) is the opposite sign of \(\nu_j(t) - \nu_j(\ta)\) for all \(j\in \Ia\). This concludes the proof.
\end{proof}

\subsection{Algorithm precision}

It has been established in Proposition~\ref{prop:okbisection} that each bisection iteration gets \(\hat{t}\) closer to \(\ta\). Therefore, it is straightforward to obtain a precision on \(\ta\).
\begin{corollary}
  Algorithm~\ref{alg:Algorithm1} finds \(\hat{t}\in \ta\pm \delta\).
\end{corollary}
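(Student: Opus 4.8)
The corollary to be proved states that Algorithm~\ref{alg:Algorithm1} returns $\hat t \in \ta \pm \delta$. Let me sketch how I would prove this.

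The plan is to argue that the algorithm maintains a bracketing interval $[t_{\min}, t_{\max}]$ that provably contains $\ta$ throughout execution, and whose length is halved at each iteration, so that on termination $\hat t$ is within $\delta$ of $\ta$.

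\begin{proof}[Proof of the Corollary]
First I would dispose of the trivial branch: if line 3 triggers, then $\lambda \geq \sum_j y_{1j} = \sum_j \max_i |x_{ij}| \geq \lambda_{\max}$ (using the characterization from Proposition~\ref{prop:lambdamax}), so $\ta = 0 = \hat t$ and the claim holds with room to spare. Otherwise we enter the while loop, and I would establish the loop invariant $t_{\min} \leq \ta \leq t_{\max}$. At initialization (line 4), $t_{\min} = 0$ and $t_{\max} = \|X_{j_1}\|_1 = \max_{j\leq m}\|X_j\|_1$, which brackets $\ta$ by the last statement of Proposition~\ref{prop:tdomain}. For the inductive step, suppose the invariant holds and the current test point is $t \in (t_{\min}, t_{\max})$. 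By Proposition~\ref{prop:okbisection}, $\mathrm{sign}\!\left(\sum_{j\in I(t)}\nu_j(t) - 1\right) = \mathrm{sign}(\ta - t)$. Hence if $\sum_{j} \nu_j(t) > 1$ (line 7), then $\ta > t$, and setting $t_{\min} \leftarrow t$ preserves $t_{\min} \leq \ta$; symmetrically, if $\sum_j \nu_j(t) < 1$ (line 8), then $\ta < t$, and setting $t_{\max} \leftarrow t$ preserves $\ta \leq t_{\max}$. (If the sum equals $1$ exactly, then $t = \ta$ and one may terminate immediately.) The new test point is always chosen strictly inside the updated interval, so the hypotheses of Proposition~\ref{prop:okbisection} remain satisfied at the next iteration.

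Next I would track the interval length. Each pass through the loop replaces exactly one endpoint by the current midpoint $t = (t_{\min}+t_{\max})/2$, so the length $t_{\max} - t_{\min}$ is exactly halved each iteration. Starting from $t_{\max} - t_{\min} = \|X_{j_1}\|_1$, after $k$ iterations the length is $\|X_{j_1}\|_1 / 2^k$, and the loop exits as soon as this drops to $\delta$ or below, i.e.\ after $k = \lceil \log_2(\|X_{j_1}\|_1/\delta)\rceil$ iterations, matching the complexity discussion in Section~\ref{sec:computev}. At exit, $t_{\min} \leq \ta \leq t_{\max}$ with $t_{\max} - t_{\min} \leq \delta$, and the returned $\hat t$ lies in $[t_{\min}, t_{\max}]$, so $|\hat t - \ta| \leq t_{\max} - t_{\min} \leq \delta$, which is the claim.
\end{proof}

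The only genuinely delicate point is confirming that the test point $t$ stays strictly inside $(t_{\min}, t_{\max})$ so that Proposition~\ref{prop:okbisection} (whose hypothesis is $t \in \,]0, t_{\max}[\,$, and implicitly $t \neq \ta$ for the strict sign statement to be usable as a decision) always applies; edge cases where $t$ coincides with some $\|X_j\|_1$ or with $\ta$ need a one-line remark, as indicated above. Everything else is the standard bisection bookkeeping, so I expect the main obstacle to be purely a matter of stating the invariant cleanly rather than any real difficulty.
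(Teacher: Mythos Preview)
Your proposal is correct and follows the same route the paper takes: the paper does not give an explicit proof for this corollary, simply stating that it is ``straightforward'' from Proposition~\ref{prop:okbisection} that each bisection step moves $\hat t$ closer to $\ta$. You have spelled out precisely that standard bisection bookkeeping---loop invariant $t_{\min}\le \ta \le t_{\max}$ maintained via Proposition~\ref{prop:okbisection}, interval halving, and termination---which is exactly what the paper leaves implicit. One cosmetic slip: the iteration-count discussion you cite is in the ``Complexity'' subsection of Section~3, not in Section~\ref{sec:computev}.
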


Moreover, because of the definition of the active set, which does not allow \(\ta\) to be exactly \( \|X_j\|_1 \) for any \(j\leq m\), it is possible to ensure that the estimated active set \(I(\hat{t})\) is exactly \(\ta\).
\begin{corollary}
  Let \(\hat{t}\) the output of Algorithm~\ref{alg:Algorithm1}. Suppose that
  \begin{equation}\label{eq:ascondition}
    \forall t\in [\hat{t}-\delta, \hat{t}+\delta], \; I(t) = I(\hat{t}),
  \end{equation}
  then \(\hat{I} = \Ia\).
\end{corollary}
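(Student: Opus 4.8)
The plan is to reduce the statement to the single identity $\Ia = I(\ta)$ (with $I(\cdot)$ as in Definition~\ref{def:tas}) and then to establish that identity directly from the KKT system and Proposition~\ref{prop:tdomain}. For the reduction: the preceding corollary gives $\ta \in [\hat{t}-\delta,\hat{t}+\delta]$, so hypothesis~\eqref{eq:ascondition} forces $I(\ta) = I(\hat{t})$, and a short check gives $\hat{I} = I(\hat{t})$. Indeed, for $j \in I(\hat{t})$ one has $\hat{t} < \|X_j\|_1$, hence $\psi_j(\hat{t}) > 0$ (as $\psi_j$ is strictly decreasing and vanishes at $\|X_j\|_1$), so the output column $\widehat{U}_j = S_{\lambda\psi_j(\hat{t})}(X_j)$ is a genuine thresholding of the nonzero column $X_j$ and, by definition of $\Sigma(I(\hat{t}))$, has $\ell_1$ norm exactly $\hat{t}$; whereas for $j \notin I(\hat{t})$ one has $\widehat{U}_j = X_j$, which is not thresholded. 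By the convention on active sets this says precisely $\hat{I} = I(\hat{t})$, and chaining the equalities yields $\hat{I} = I(\hat{t}) = I(\ta) = \Ia$.

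It then remains to show $\Ia = I(\ta)$ by double inclusion. For $\Ia \subseteq I(\ta)$: since $(\ta,\nua) \in \Sigma(\Ia)$ (as noted in the proof of Proposition~\ref{prop:tdomain}), that proposition gives $\ta < t_{\max}(\Ia) = \min_{j\in \Ia}\|X_j\|_1$, so every $j\in\Ia$ satisfies $\|X_j\|_1 > \ta$ and hence belongs to $I(\ta)$. For $I(\ta) \subseteq \Ia$ I would argue by contraposition: if $j \notin \Ia$, then by~\eqref{eq:kkt1} we have $\|\Ua_j\|_1 \le \ta$, and the active-set convention leaves only two possibilities. Either $\|\Ua_j\|_1 < \ta$, and then complementary slackness~\eqref{eq:kkt3} forces $\nua_j = 0$; or $\|\Ua_j\|_1 = \ta$ with column $j$ not thresholded, which again forces $\nua_j = 0$, because a strictly positive $\nua_j$ would strictly shrink every nonzero entry of $X_j$ (and $X_j \ne 0$ under the standing assumption). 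In either case $\nua_j = 0$, whence $\Ua_j = X_j$ by~\eqref{eq:Unu} and $\|X_j\|_1 = \|\Ua_j\|_1 \le \ta$, i.e. $j \notin I(\ta)$.

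I expect the main obstacle to be bookkeeping rather than conceptual: one has to keep careful track of the convention that excludes columns whose $\ell_1$ norm equals $\ta$ ``by chance'' without being thresholded, and reconcile it with the strict inequality ``$\|X_j\|_1 > t$'' in Definition~\ref{def:tas}, so that the boundary cases $\|X_j\|_1 = \ta$ and $\|\Ua_j\|_1 = \ta$ land on the correct side. It is also worth stressing that hypothesis~\eqref{eq:ascondition} is used only to transfer $I(\ta) = I(\hat{t})$; everything else rests entirely on the KKT conditions, Proposition~\ref{prop:tdomain}, and the standing assumptions that $\lambda < \lambda_{\max}$ and $X$ has no zero column.
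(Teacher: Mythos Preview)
Your argument is correct and follows the same one-line route the paper uses --- apply the previous corollary to place $\ta$ in $[\hat t-\delta,\hat t+\delta]$ and then read off $I(\ta)=I(\hat t)$ from hypothesis~\eqref{eq:ascondition} --- but you go further and carefully justify the two identifications $\hat I = I(\hat t)$ and $\Ia = I(\ta)$ that the paper leaves implicit. Your double-inclusion proof of $\Ia = I(\ta)$ via Proposition~\ref{prop:tdomain} and the KKT conditions, together with your handling of the ``by chance'' boundary case, is exactly the bookkeeping needed to make the paper's one-sentence proof rigorous.
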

\begin{proof}
  It is ensured that \(\ta\in [\hat{t}-\delta, \hat{t}+\delta]\), and therefore condition~\eqref{eq:ascondition} implies \( I = \Ia \) when cast for \(t = \ta\).
\end{proof}
Condition~\eqref{eq:ascondition} is easy to check numerically, thus Algorithm~\ref{alg:Algorithm1} can in principle return, on top of approximate primal and dual solutions, a guaranty of active set optimality.

It remains to discuss the precision of Algorithm~\ref{alg:Algorithm1} on the primal variables \(U\) and dual variable \(\nu_j\). At first glance, since \(t\) is the sum of absolute values of \(U_j\) as long as \(j\in I(t)\), there could be huge discrepancies in \(U_j\) while \(t\) is close to \(\ta\). Hopefully, recalling that \(U\) depend directly on \(\nu_j\) which are monotone with \(t\), such error cancelation cannot happen, which leads to the following result:
\begin{proposition}\label{prop:iterprec}
  Algorithm~\ref{alg:Algorithm1} return dual variables \(\hat{\nu}\) and primal variables \(\hat{U}\) such that \(\forall j\leq m\) and \(\forall i\leq n\), \(|\hat{\nu}_j - \nua_j| \leq \frac{\delta}{\lambda} \) and  \( |\hat{u}_{ij} - \ua_{ij} | < \delta \).
  Furthermore, if \(I(\hat{t}) = \Ia\), then \(\forall j\in \overline{I(t)}\), \( \hat{U}_j = \Ua_j \) and \(\hat{\nu}_j = \nua_j\).
\end{proposition}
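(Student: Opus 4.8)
The plan is to reduce the whole statement to two elementary Lipschitz estimates, both anchored at a bound on $|\hat t-\ta|$. First I would sharpen the preceding corollary to a \emph{strict} bound: the while loop of Algorithm~\ref{alg:Algorithm1} exits only once $t_{\max}-t_{\min}\leq\delta$, the updates in lines~7--8 always leave the running $t$ strictly inside $(t_{\min},t_{\max})$, and Proposition~\ref{prop:okbisection} keeps $\ta\in[t_{\min},t_{\max}]$ throughout; hence the returned $\hat t$ satisfies $|\hat t-\ta|<\delta$. This strictness is precisely what delivers the strict inequality on $\hat u_{ij}$.

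Next I would pin down the exact form of all quantities. By construction of lines~5--6 and by~\eqref{eq:Unu}, the algorithm's output is $\hat\nu_j=\nu_j(\hat t)$ and $\hat u_{ij}=S_{\lambda\nu_j(\hat t)}(x_{ij})$; on the optimal side, the KKT conditions together with Definition~\ref{def:tas} and Proposition~\ref{prop:bij} give $\nua_j=\nu_j(\ta)$ and $\ua_{ij}=S_{\lambda\nu_j(\ta)}(x_{ij})$. The only bookkeeping point here is the identification $\Ia=I(\ta)$: if $\|X_j\|_1>\ta$ then $\nua_j>0$ (otherwise $\|S_{\lambda\nua_j}(X_j)\|_1=\|X_j\|_1>\ta$, contradicting~\eqref{eq:kkt1}) and complementary slackness~\eqref{eq:kkt3} forces $j\in\Ia$, while for $\|X_j\|_1\leq\ta$ one gets $\nua_j=0$ from~\eqref{eq:kkt3}; so $\nua=\nu(\ta)$ entrywise. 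It then suffices to control the two maps $t\mapsto\nu_j(t)$ and $t\mapsto S_{\lambda\nu_j(t)}(x_{ij})$ on $[\,\min(\hat t,\ta),\max(\hat t,\ta)\,]$.

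For the first map, Proposition~\ref{prop:bij} shows $\psi_j$ is piecewise linear on $(0,\|X_j\|_1)$ with slopes $-1/(\lambda\#\Omega_{\lambda\nu_j})$, of absolute value at most $1/\lambda$ since $\#\Omega_{\lambda\nu_j}\geq1$; extending by $\nu_j(t)=0$ for $t\geq\|X_j\|_1$ is continuous (the last piece equals $(\|X_j\|_1-t)/(\lambda n)\to 0$) and keeps every slope in $[-1/\lambda,0]$, so $\nu_j(\cdot)$ is $1/\lambda$-Lipschitz and $|\hat\nu_j-\nua_j|=|\nu_j(\hat t)-\nu_j(\ta)|\leq\frac1\lambda|\hat t-\ta|\leq\delta/\lambda$. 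For the second map, for fixed $x$ the soft-threshold $a\mapsto S_a(x)=[\,|x|-a\,]^+\text{sign}(x)$ is $1$-Lipschitz in $a$, so $t\mapsto S_{\lambda\nu_j(t)}(x_{ij})$ is a composition of the $1$-Lipschitz map $t\mapsto\lambda\nu_j(t)$ with a $1$-Lipschitz map and is therefore $1$-Lipschitz; hence $|\hat u_{ij}-\ua_{ij}|\leq|\hat t-\ta|<\delta$. Finally, when $I(\hat t)=\Ia$, any $j\in\overline{I(\hat t)}$ has $\hat\nu_j=\nu_j(\hat t)=0$ by~\eqref{eq:defnu} and $\nua_j=\nu_j(\ta)=0$ since $j\notin\Ia=I(\ta)$, so $\hat\nu_j=\nua_j$ and then $\hat U_j=S_0(X_j)=X_j=\Ua_j$, because $S_0$ is the identity.

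I expect the main obstacle to be not any single computation but the two preparatory points: extracting the strict bound $|\hat t-\ta|<\delta$ from the loop's stopping test (versus the non-strict $\leq\delta$ of the corollary), and carefully justifying $\nua=\nu(\ta)$ via $\Ia=I(\ta)$ under the standing assumptions ($\lambda<\lambda_{\max}$, no zero columns of $X$). Once these are settled, everything else is the short Lipschitz chain above with constants $1/\lambda$ and $1$.
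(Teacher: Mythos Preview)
Your proposal is correct and follows essentially the same route as the paper: both arguments bound $|\hat\nu_j-\nua_j|$ via the piecewise-linear slope $1/(\lambda\#\Omega_{\lambda\nu_j})\leq 1/\lambda$ of $\nu_j(t)$, then transfer this to $\hat u_{ij}$ through the $1$-Lipschitz (in the threshold) soft-thresholding, and conclude exactness outside the active set from $\hat\nu_j=\nua_j=0$. Your write-up is somewhat more scrupulous than the paper's own proof in two places---you explicitly justify the strict bound $|\hat t-\ta|<\delta$ from the bisection mechanics, and you verify $\nua=\nu(\ta)$ via $\Ia=I(\ta)$---but these are refinements of the same argument rather than a different approach.
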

\begin{proof}
  Let \(j\leq m\). According to equation~\eqref{eq:link}, \(\nu_j(t)\) is a piecewise linear function. The maximal variation of \(\nu_j(t)\) with respect to \(t\) is obtained when \(\#\Omega_{\lambda\nu_j(t)} = 1\). Indeed, the slope of the linear pieces are \(\frac{1}{\lambda\#\Omega_{\lambda\nu_j(t)}}\), and the case \(\#\Omega_{\lambda\nu_j(t)}=0\) is obtained when \(\nu_j(t)=0\) in which case \(\nu_j(t)\) does not vary with \(t\). Therefore, for any \(j\leq m\),
  \begin{equation}
    |\hat{t}-\ta| < \delta \; \implies \; |\nu_j(\hat{t}) - \nua | \leq \frac{\delta}{\lambda}
  \end{equation}
  A similar argument links the variations of \(\nu_j\) and each \(u_{ij}\) for \(i\leq n\). Indeed, first order optimiality condition~\eqref{eq:kkt5} dictates that \(u_{ij}\) is piecewise-linear (there are in fact only two pieces) with respect to \(\nu_j\), and in particular the slope is at most \(\lambda\). Therefore,
  \begin{equation}
  |\hat{t}-\ta| < \delta \; \implies \; |\hat{u}_{ij} - \ua_{ij} | \leq \delta
  \end{equation}
Finally, infinite precision is achieved outside the active set when the active set is correctly estimated. Indeed, let \(j\in \overline{I(\hat{t})} = \overline{\Ia} \). Then \(\hat{\nu}_j = \nua_j = 0\).
\end{proof}

\subsection{Computing dual variables from the consensus variable}\label{sec:computev}

It has been assumed above that, given
a value of \(t\), it is possible to compute the corresponding dual parameters \( \nu_j(t) \) that satisfy KKT conditions~\eqref{eq:kkt2},~\eqref{eq:kkt3} and~\eqref{eq:kkt5} with active set \( I(t) \). In other words, the following optimization problem should be solved:
\begin{equation}\label{eq:compute_nu}
  \underset{\nu_j(t)>0}{\text{Find}} \sum_i \left[|x_{ij}| - \lambda\nu_j(t)\right]^+ = t
\end{equation}
which has a single solution because the left-hand side is strictly decreasing. Algorithm~\ref{alg:routine} details this computation, while the rest of this subsection provides explanations.

First suppose \(x:=X_j\) is a single column of \( X \) belonging to the active set \( I(t) \) which has been sorted reversely using absolute values (columns which are not in the active set \( I(t) \) are not useful in order to compute non-zero \( \nu_j(t) \)). According to the KKT conditions, the proximal operator will threshold \( x \). Suppose also that the \( \ell_1 \) norm \( t \) of \( x \) after thresholding is fixed (as is the case in step 6 of Algorithm~\ref{alg:Algorithm1}). Furthermore, suppose again that the set \( \Omega_{\lambda\nu(t)} \) of non-clipped entries in \( x \) is known. With all these fixed quantities, it is simple to compute \( \nu(t) \) using~\eqref{eq:link}. In the following, I denote \( \Omega(i)=[1,i] \) a given, arbitrary set of non-clipped entries and \( \nu(t,i) \) the resulting dual parameter as computed with~\eqref{eq:link}, while \( \nu(t) \) is defined as in~\eqref{eq:defnu} and \( \Omega_{\lambda\nu(t)} \) follows the definition in~\eqref{eq:omegadef}. All the \(j\) indices are removed for simplicity in this subsection, because only one column \( x=X_j \) with \( j \) in \( I(t) \) is considered.

Now within Algorithm~\ref{alg:Algorithm1}, assuming \( t \) is fixed at each iteration is natural because of the bisection strategy. However the value of \( \Omega_{\lambda\nu(t)}\) is not known. What I propose is to compute all the possible values \( \nu(t,i) \) for each candidate \( \Omega(i) \). This is easily done because the computation of \( \nu(t,i) \) requires only the partial cumulative sum of \( x \) up to index \( i \), and therefore all the partial cumulative sums may only be computed once for each column of \(X\) at the beginning of Algorithm~\ref{alg:Algorithm1}. This costs exactly \(nm\) sums, outside the main loop of Algorithm~\ref{alg:Algorithm1}.

Then, any wrong choice of \( \Omega(i) \) must lead to a contradiction when performing the test
  \begin{equation}\label{eq:test}
    |x_{i+1}| \leq \lambda\nu(t,i) \;\text{ and }\; |x_i| > \lambda\nu(t,i)
  \end{equation}
  except for \(i=n\) for which only the first inequality should be checked. This holds since \( \nu(t,i) \) can only be the solution to~\eqref{eq:compute_nu} for a single, unknown \( i \), \textit{i.e.} the fact that \( \Omega(i) \) is indeed equal to \( \Omega_{\lambda\nu(t,i)} \) as defined in~\ref{eq:omegadef} is the only assumption that can be violated by contradiction.

Obviously both conditions cannot be violated at the same time for a (reverse) sorted \( x \): either \( \nu(t,i) \) is smaller than the true solution \( \nu(t) \) and \( |x_{i+1}| > \lambda\nu(t,i)  \), or it is too large and \( |x_{i}| \leq \lambda\nu(t,i) \). In fact, denoting \( i^* \) the index such that \( \nu(t,i^\ast)=\nu(t) \), if the set \( \Omega(i) \) is larger than the set of non-clipped entries \(\Omega(i^*) \) in \( S_{\lambda\nu(t)}(x) \), \textit{i.e.} when \( i>i^* \), then \( \nu(t,i) > \nu(t) \).
A proof is given at the end of this section.

Therefore, to find \( i^\ast \), it is sufficient to look for the largest index \( i \) such that
\begin{equation}
   |x_i| - \lambda\nu(t,i) \geq 0 .
\end{equation}
It is very important to notice at this stage that the sequence \( \left[x_{i} - \lambda\nu(t,i)\right]_i \) is provably sorted (see proof below). Therefore, finding \( i^\ast \) boils down to searching for the position of zero inside the sorted array \( x - \lambda\nu(t,:) \), which has complexity at worse \( \mathcal{O}(\log_2(n))\)\footnote{An earlier version of this work did not utilize this fact and searched an index satisfying~\eqref{eq:test}. It is therefore considerably slower and should not be used.}.

\begin{proof}
  Let me prove the two statements used to derive Algorithm~\ref{alg:routine}:
  \begin{itemize}
     \item[(i)] \( i>i^* \) implies \( \nu(t,i) > \nu(t) \),
     \item[(ii)] the sequence \( \left[x_{i} - \lambda\nu(t,i)\right]_i \) is reverse sorted.
   \end{itemize}

   (i) By applying~\eqref{eq:link} twice for \( \Omega(i) \) and \( \Omega(i^\ast) \) and taking the difference, it holds that
   \begin{equation}
     0 = \sum_{p=i^\ast +1}^{i} |x_p| + \lambda\left( i\nu(t,i) - i^\ast\nu(t)  \right).
   \end{equation}
   By adding and subtracting \( (i-i^\ast) \nu(t)\), the above is equivalent to
   \begin{equation}
     0 = \lambda i (\nu(t,i) - \nu(t)) + \sum_{p=i^\ast+1}^{i}\left[|x_p| - \lambda \nu(t)\right].
   \end{equation}
   By definition of \( i^\ast \), for any \( p>i^\ast \), \( |x_p| < \lambda\nu(t) \) and therefore \( \nu(t,i)>\nu(t) \).

   (ii) Denote \( N_i = |x_i| + \frac{t - \sum_{p=1}^{i}|x_p|}{i\lambda} = |x_i| + \frac{\sum_{p=i+1}^{n}|x_p|}{i\lambda} \). Let us check that \( N_i \geq N_{i+1} \). It holds that
   \begin{align*}
     N_i - N_{i+1} = |x_i| - |x_{i+1}| + \frac{1}{i(i+1)\lambda}\left[ \sum_{p=i+2}^{n} |x_p| + (i+1) |x_{i+1}| \right]
   \end{align*}
   which is nonnegative since \( |x_i| \geq |x_{i+1}| \).
\end{proof}
\paragraph{Complexity}
Ignoring line 1, the whole complexity of Algorithm~\ref{alg:routine}, applied to all column in \( I(t) \), is a vectorized matrix-matrix elementwise difference and division, plus a columnwise sorted search. In principle the cost should be dominated by a \( \mathcal{O}(nm) \) term from the elementwise division, however efficient low-level implementations of vectorized computations make this statement quite unrealistic. Moreover, all columns in the active set \( I(t) \) for a given value of \( t \) can be processed in parallel with matrix-level operations to avoid loops. Good implementations on modern computers with efficient matrix-level routines could consider this algorithm with a smaller \( \mathcal{O}(m\log_2(n)) \) with the sorted search being the bottleneck.


\begin{algorithm}
  \caption{Algorithm for finding \(\nu(t)\), for a single column}\label{alg:routine}
  \begin{algorithmic}
   \STATE{\textbf{Inputs}: t, \(\lambda \), column-wise sorted column \(x\in\mathds{R}^{n\times 1}\)}
   \STATE{1. Set \(z\) as the vector of partial cumulative sums of \(x\) (this can be precomputed).}
   \STATE{2. Define \(N := x - \frac{z-t}{\lambda[1;\ldots;n]}\), division understood element-wise.}
   \STATE{3. Find \(i\) such that \(N[i]\geq 0 > N[i+1]\) using a sorted list search.
   \STATE{4.} Set \(\nu(t) := \nu(t,i) \).}
   \STATE{\textbf{Outputs}: \(\nu(t)\).}
  \end{algorithmic}
\end{algorithm}

\subsection{Choice of regularization parameter}

In a context of using \(\prox_{\lambda\ell_{1,1}} \) inside an optimization algorithm such as a proximal gradient, it would be useful to be provided with some guidance on how to choose \(\lambda \). We show in what follows that there exist a maximal value of \(\lambda \) above which the solution is null.
\begin{proposition}\label{prop:lambdamax}
  Set \(\lambda_{\max}:=\sum_{j\leq m} \max_{i\leq n} |x_{ij}| \), and suppose \( X \) has no zero columns. Then
  \(\lambda \geq \lambda_{\max} \) if and only if \(\prox_{\lambda\ell_{1,1}}(X)=0\). Furthermore, if any column of \(U^*\) is null, then \(\lambda\geq\lambda_{\max}\) and \(U^*\) is null.
\end{proposition}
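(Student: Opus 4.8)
The plan is to prove the equivalence through the first-order optimality (subdifferential) condition of the proximal problem, and then to obtain the ``furthermore'' clause by an elementary perturbation argument that reduces it to the equivalence. Throughout I would take care to avoid the standing assumption $\lambda<\lambda_{\max}$ and the relations it entails (in particular~\eqref{eq:kkt4}), since the boundary regime $\lambda\ge\lambda_{\max}$ is precisely what is at stake.

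For the equivalence I would start from the observation that, writing $g:=\ell_{1,1}$, the matrix $0$ minimizes $g(U)+\frac{1}{2\lambda}\|U-X\|_F^2$ if and only if $\frac{1}{\lambda}X\in\partial g(0)$. Since $g$ is a norm, hence positively homogeneous with $g(0)=0$, the subdifferential $\partial g(0)$ equals $\{Z:\langle Z,U\rangle\le g(U)\ \forall U\}=\{Z:\sup_{g(U)\le1}\langle Z,U\rangle\le1\}$. The constraint $g(U)\le1$ is equivalent to $\|U_j\|_1\le1$ for \emph{every} column $j$, so this supremum separates column-wise and equals $\sum_j\|Z_j\|_\infty=\sum_j\max_i|z_{ij}|$ by $\ell_1$--$\ell_\infty$ duality (each term being attained at a signed canonical vector supported on an index of largest magnitude). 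Setting $Z=X/\lambda$ then yields $\frac{1}{\lambda}X\in\partial g(0)\iff\frac{1}{\lambda}\sum_j\max_i|x_{ij}|\le1\iff\lambda\ge\lambda_{\max}$, which is exactly the claimed equivalence; as a by-product this recovers, for the point of interest, the dual-norm formula $\|X\|_{1,1}^*=\sum_j\max_i|x_{ij}|$ quoted in the introduction.

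For the ``furthermore'' clause I would argue by contradiction: suppose $U^*_k=0$ for some column $k$ yet $t^*:=\|U^*\|_{1,1}=\max_j\|U^*_j\|_1>0$. Since $\|U^*_k\|_1=0<t^*$, the maximum is attained at some column $j\ne k$, so $\max_{j\ne k}\|U^*_j\|_1=t^*$. Because $X$ has no zero column, $\|X_k\|_1>0$; choosing $\theta\in(0,1]$ small enough that $\theta\|X_k\|_1\le t^*$ and replacing column $k$ of $U^*$ by $\theta X_k$ produces a matrix $U'$ with $\|U'\|_{1,1}=t^*=\|U^*\|_{1,1}$ but $\|U'-X\|_F^2-\|U^*-X\|_F^2=\|\theta X_k-X_k\|_2^2-\|X_k\|_2^2=(\theta^2-2\theta)\|X_k\|_2^2<0$, strictly decreasing the objective and contradicting optimality of $U^*$. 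Hence $t^*=0$, so every column of $U^*$ vanishes and $U^*=0$, and the equivalence gives $\lambda\ge\lambda_{\max}$.

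The main obstacle I anticipate is not any single computation but the bookkeeping around the degenerate case $t^*=0$: the KKT block stated earlier (especially~\eqref{eq:kkt4}) is valid in the regime $\lambda<\lambda_{\max}$ where $t^*>0$, so one must not invoke it when proving this proposition; both arguments above are designed around this. If a KKT-flavoured sufficiency proof is preferred instead, I would verify directly that, when $\lambda\ge\lambda_{\max}$, the choice $U=0$, $t=0$, $\nu_j=\max_i|x_{ij}|/\lambda$ together with a multiplier $\mu=1-\lambda_{\max}/\lambda\ge0$ for the constraint $t\ge0$ satisfies the full Karush--Kuhn--Tucker system of~\eqref{eq:proxl1equiv}.
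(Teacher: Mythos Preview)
Your proof is correct, but the route differs from the paper's. The paper argues both directions through the KKT system: if $U^*=0$ then each column is fully clipped, so $\lambda\nu_j^*\ge\max_i|x_{ij}|$, and summing with~\eqref{eq:kkt4} gives $\lambda\ge\lambda_{\max}$; conversely, if $U^*\neq0$ then no nonzero column of $X$ can be clipped to zero (else $t^*=0$), so for each $j$ some entry survives, whence $\lambda\nu_j^*<\max_i|x_{ij}|$ and summation gives $\lambda<\lambda_{\max}$. For the ``furthermore'' clause the paper observes that a null column forces $\nu_j^*>0$ via~\eqref{eq:Unu}, hence $t^*=\|U_j^*\|_1=0$ by~\eqref{eq:kkt3}, so $U^*=0$.

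Your approach instead characterizes $U^*=0$ directly by $X/\lambda\in\partial\ell_{1,1}(0)$, identifies this set as the dual-norm unit ball, and computes the dual norm column-wise; this is cleaner, sidesteps the question you rightly flag about whether~\eqref{eq:kkt4} holds as an equality when $t^*=0$ (in the paper's Lagrangian no multiplier is attached to $t\ge0$, so strictly speaking one only has $\sum_j\nu_j^*\le1$ at the boundary---the paper's inequality chain still goes through, but your argument avoids the issue altogether), and moreover supplies a proof of the dual-norm formula the paper merely states. Your perturbation argument for the ``furthermore'' clause is likewise self-contained, whereas the paper's one-line proof leans on~\eqref{eq:Unu} and~\eqref{eq:kkt3}. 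Both routes are valid; yours is more elementary and more robust at the boundary, the paper's is shorter given the KKT machinery already in place.
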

\begin{proof}
Suppose \(\prox_{\lambda\ell_{1,1}}(X)=0\). Then all constraints are active (unless \(X\) has a zero column) and columns are fully clipped. Hence for all \( j \leq m \),
\begin{equation}
  \nua_j\lambda \geq \max_i |x_{ij}|
\end{equation}
which, using the fact that \(\sum_{j\leq m} \nua_j = 1\), yields
\begin{equation}
  \lambda \geq \sum_{j\leq m} \max_{i\leq n} |x_{ij}|~.
\end{equation}
Conversely, if \(\prox_{\lambda\ell_{1,1}}(X)\neq 0\), then for all \(j\) such that \(X_j\neq 0\) there exist \(i_j\) such that\footnote{the reasoning here is that if a nonzero column of \(X\) is clipped to zero in the proximal operator, then it must be in the active set and \(t=0\) as well, which is impossible by assumption. Therefore no nonzero column of \(X\) is clipped to zero.}
\begin{equation}
  \nua_j\lambda < |x_{i_j j}| < \max_i |x_{ij}|
\end{equation}
which yields
\begin{equation}
  \lambda < \sum_{j} \max_i |x_{ij}|~.
\end{equation}

To prove the second part of the proposition, notice that if any column of \(U^*\) is null, then it is clipped, therefore \(t^*=0\), thus \(U^*=0\).
\end{proof}

\section{Discussion}

\subsection{Applications}
It has been shown that the proximal operator of two induced matrix norms \(\ell_{1,1}\) and \(\ell_{\infty,\infty}\) can be computed for a reasonable cost at arbitrary precision. However, it remains to motivate the study of this operator. Why should one care about the proximity operator of induced matrix norms?

A typical use of proximal operator, at least which the author is familiar with, is as a means to solve regularized optimization problems of the form
\begin{equation}\label{eq:genoptpb}
  \underset{X}{\argmin}{f(X)+g(X)}
\end{equation}
where \(f\) is a smooth convex function, and \(g\) is convex proper lower-semicontinuous and admits a computable proximal operator. Then the solution of~\eqref{eq:genoptpb}, if it exists, may be found by a variant of choice of the proximal gradient algorithm, see for instance~\cite{Beck2017first} for more details. But such a problem with \(g=\ell_{1,1} \), as far as I know, does not appear in the literature. So for now, this works remain purely theoretical and aimed at extending the already long list of norms for which the proximal operator is easily computed.

\subsection{Behavior with respect to the input}

An important detail about \(\ell_{1,1}\) is that it is a ``uniform'' column-wise \(\ell_1\) norm on the columns of the input \(X\). In other words, using the \(\ell_{1,1}\) matrix induced norm as a regularizer in~\eqref{eq:genoptpb} penalizes more intensely the columns of the solutions that have larger \(\ell_1\) norms. This behavior is quite clear when studying the proximal operator: it may happen that one column of \(X\), say the first one, is much larger than any other. Then it may be the only one in the active set. Moreover, it can even hold that \( \prox_{\lambda\ell_{1,1}}{(X)}_1\) is \(1\)-sparse while \(\prox_{\lambda\ell_{1,1}}{(X)}_{j} = X_{j}\) for \(j\neq 1\) for a well-chosen \(\lambda \). For instance, setting
\begin{equation}
  X = \left[
  \begin{array}{cc}
  1 & 0.1 \\
  2 & 0.2 \\
  3 & 0.3 \\
  \end{array}
  \right],
\end{equation}
one easily checks, for instance using the KKT conditions, that
\begin{equation}
  \prox_{2.1\ell_{1,1}}(X) = \left[
  \begin{array}{cc}
  0 & 0.1 \\
  0 & 0.2 \\
  0.9 & 0.3
  \end{array}
  \right]~.
\end{equation}
However, columns of the proximal operator cannot be completely thresholded to zero unless \(\lambda \) is larger than \(\lambda_{\max}\) defined in Proposition~\ref{prop:lambdamax}. This means that, unlike the \(\ell_1\) norm computed on all the elements of \(X\), the \(\ell_{1,1}\) norm will not allow a strict subset of columns to be forced to zero.
On the other hand, if the columns of \(X\) have balanced \(\ell_1\) norms, \textit{a priori} the proximal operator will put to zero entries in all columns of \(X\). These are quite remarkable features of this regularization which differ from, for instance, the \(\ell_1\) norm of the unfolded \(X\), or the sum of the \(\ell_1\) norms of columns of \(X\).

\subsection{Practical runtime}

To showcase the computation time of the proposed Algorithm~\ref{alg:Algorithm1}, a few experiments are ran below. The proximal operator is implemented in Python without any particular fine tuning. The code is available at \url{github.com/cohenjer/Tensor_codes}, a Matlab implementation is also provided.

\begin{figure}
  \centering
  \includegraphics[width=10cm]{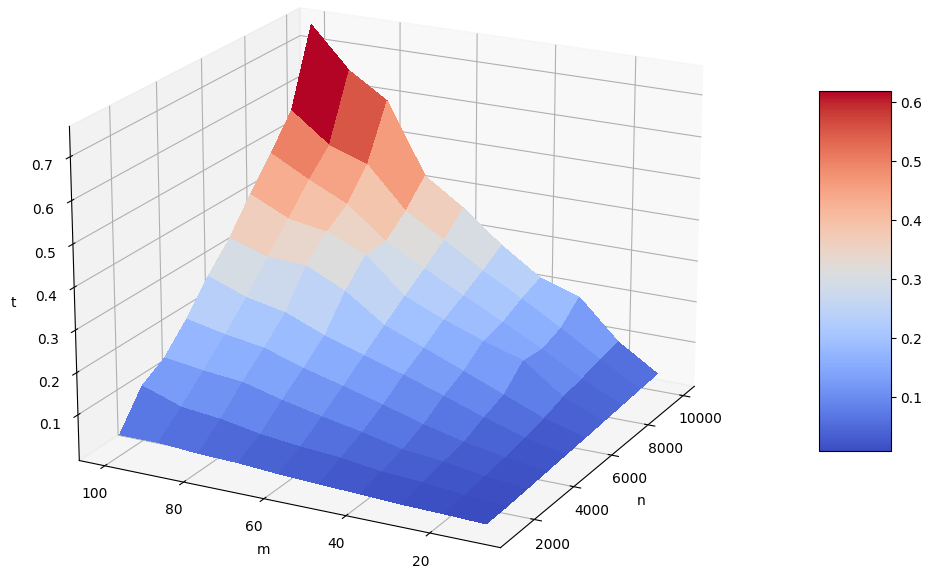}
  \caption{Average proximal operator \(\prox_{\ell_{1,1}}\) computation time. Parameter \(\lambda \) is set to half the maximum regularization, which yields a high sparsity level in the solution.}\label{fig:tpscalcul}
\end{figure}

For various sizes \((n,m)\), the average computation time of the \(\ell_{1,1}\) proximal operator of \(X\in\mathds{R}^{n\times m}\), sampled element-wise from a unitary centered Gaussian distribution, is recorded. Figure~\ref{fig:tpscalcul} shows the raw average computation time \(t(n,m)\) in seconds where the average is taken over 5 realizations for \(X\in\mathds{R}^{n\times m}\). Parameters \(\lambda \) and \(\delta \)
where set respectively to \(0.5\lambda_{\max}\) and \(10^{-8}\). It can be observed that the computation time seems fairly linear with respect to \(m\) and \(n\) as expected.

\section*{Acknowledgments}
I want to thank Le Thi Khanh Hien for a very helpful proof-checking and discussion around the uniqueness of the dual parameters, as well as for pointing towards the finite minimax literature.

\bibliographystyle{plain}

\end{document}